\documentclass[12pt,leqno]{amsart}
\usepackage{amssymb,amsthm,amsmath,latexsym,wasysym}
\usepackage{soul}
\usepackage[all]{xy}
\usepackage{xcolor}

\newtheorem{theorem}{\sc Theorem}[section]
\newtheorem{thm}[theorem]{\sc Theorem}
\newtheorem{lem}[theorem]{\sc Lemma}
\newtheorem{prop}[theorem]{\sc Proposition}
\newtheorem{cor}[theorem]{\sc Corollary}

\newtheorem{rem}[theorem]{\sc Remark}

\newtheorem*{con1'}{Conjecture 1'}

\title[$n$-fold tensor product of groups]{Finiteness conditions for the $n$-fold tensor product of groups}
\subjclass[2010] {20E06, 20E22, 20E34, 20F18, 20F24, 20J99}

\author[Bastos]{Raimundo Bastos}
\author[Ortega]{Guilherme Ortega}
\address{Departamento de Matem\'atica, Universidade de Bras\'ilia,
Brasilia-DF, 70910-900 Brazil}

\email{(Bastos) bastos@mat.unb.br; (Ortega) ortega@mat.unb.br}
\keywords{finitely presented groups; finiteness conditions; non-abelian tensor product of groups; $n$-fold tensor product}

\begin{document}
\maketitle
\begin{abstract}
Let $G$ be a finitely generated group. We prove that the $n$-fold tensor product $G^{\otimes n}$ is finite (resp. polycyclic) if and only $G$ is finite (resp. polycyclic). Further, assuming that $G$ is finitely presented, we show that $G^{\otimes n}$ is finitely presented if and only if $\gamma_n(G)$ is finitely presented. We also examine some finiteness conditions for the non-abelian tensor product of groups. 
\end{abstract}

%%% ----------------------------------------------------------------------
\maketitle
%%% ----------------------------------------------------------------------
%\tableofcontents

\section{Introduction}

Let $G$ and $H$ be groups each of which acts upon the other (on the right),
\[
G\times H \rightarrow G, \; (g,h) \mapsto g^h; \; \; H\times G \rightarrow
H, \; (h,g) \mapsto h^g
\]
and on itself by conjugation, in such a way that for all $g,g_1 \in G$ and
$h,h_1 \in H$,
\begin{equation}   \label{eq:0}
g^{\left( h^{g_1} \right) } = \left( \left( g^{g^{-1}_1}  \right) ^h \right) ^{g_1} \; \; \mbox{and} \; \; h^{\left( g^{h_1}\right) } =
\left( \left( h^{h_1^{-1}} \right) ^g \right) ^{h_1}.
\end{equation}
In this situation we say that $G$ and $H$ act {\em compatibly} on each other. 

Let $G$ and $H$ be groups that act compatibly on each other. The non-abelian tensor product $G \otimes H$ was introduced by Brown and Loday \cite{BL} following works of Miller \cite{Miller}, Dennis \cite{Dennis} and Lue \cite{Lue}. It is defined to be the group generated by all symbols $\; \, g\otimes h, \; g \in G, \; h\in H$, subject to the relations
\[
gg_1 \otimes h = ( g^{g_1}\otimes h^{g_1}) (g_1\otimes h) \quad
\mbox{and} \quad g\otimes hh_1 = (g\otimes h_1)( g^{h_1} \otimes
h^{h_1})
\]
for all $g,g_1 \in G$, $h,h_1 \in H$. When $G = H$ and all actions are conjugations,  $G \otimes G$ is the non-abelian tensor square. Now, there is a well defined action
of $G$ on $G \otimes G$ by $$(g_1 \otimes g_2)^{g_3} = g_1^{g_3} \otimes g_2^{g_3},$$ where $g_i \in G$. Moreover, there is a natural action of $G \otimes G$ on $G$ given by $$g_1^{g_2 \otimes g_3} = g_1^{[g_2,g_3]}.  $$ With these (compatible) actions we write $G^{\otimes 3}$ to denote the non-abelian tensor product $(G \otimes G) \otimes G$. Furthermore, for any $n \geqslant 3$, we can inductively define the $n$-fold tensor product, denoted by $G^{\otimes n}$, by considering the actions of $G$ and $G^{\otimes  n-1}$ on each other defined by $$(((g_1 \otimes g_2) \otimes \ldots \otimes g_{n-2}) \otimes g_{n-1})^{g_n} = ((g_1^{g_n} \otimes g_2^{g_n}) \otimes \ldots \otimes g_{n-2}^{g_n})\otimes g_{n-1}^{g_n}$$ and $$g_1^{((g_2 \otimes g_3) \otimes \ldots \otimes g_{n-1})\otimes g_n}  = g_1^{[g_2, g_3, \ldots, g_n]}.$$

Note that all involved actions are compatible. Furthermore, there is a well-defined homomorphism $\lambda^G_{n} \colon G^{\otimes n} \to \gamma_n(G)$ defined on generators by $$ ((g_1 \otimes g_2) \otimes \ldots \otimes g_{n-1}) \otimes g_{n} \mapsto [g_1, g_2, \ldots, g_n].$$ 

Here $\lambda_2^G$  corresponds to the derived map $\kappa$ of \cite{BL} and $\ker(\lambda_2^G)$ is isomorphic to $\pi_3(SK(G,1))$, where $SK(G,1)$ is the suspension of an Eilenberg-MacLane space $K(G,1)$. According to \cite[Proposition 2.8]{NR2}, the sequence $$1 \to \Delta(G) \to \ker(\lambda_2^G) \to H_2(G) \to 1$$ is exact, where $\Delta(G) = \langle g \otimes g \mid g \in G\rangle$ and $H_2(G)$ is the second homology group of the group $G$. 

In the present article we consider some finiteness conditions for the non-abelian tensor product of groups and related constructions. Finiteness conditions for the non-abelian tensor product and related constructions were considered in a number of papers (see \cite{BNR,BNR2,DLP,DLT,DM,Ellis,KS,Moravec.poly,T} and references therein for further results).
  
In \cite{DM}, Donadze and Garcia-Martinez proved that if $G$ is a finitely generated group, then the $n$-fold tensor product $G^{\otimes n}$ is finitely generated if and only if the subgroup $\gamma_n(G)$ is finitely generated. Here, we prove an analogue of this theorem for finitely presented groups.

\begin{thm} \label{thm:fp}
Let $G$ be a finitely presented group. Then, the $n$-fold tensor product $G^{\otimes n}$ is finitely presented if and only if $\gamma_n(G)$ is finitely presented.    
\end{thm}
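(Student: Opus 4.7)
The strategy is to apply the standard criterion for finite presentability to the canonical short exact sequence
\[
1 \longrightarrow \ker(\lambda_n^G) \longrightarrow G^{\otimes n} \xrightarrow{\lambda_n^G} \gamma_n(G) \longrightarrow 1.
\]
Recall that in a short exact sequence $1 \to N \to E \to Q \to 1$ with $E$ finitely generated, $E$ is finitely presented if and only if $Q$ is finitely presented and $N$ is finitely generated as a normal subgroup of $E$. Either hypothesis of the theorem forces $\gamma_n(G)$ to be finitely generated, and the theorem of Donadze and Garcia-Martinez \cite{DM} then delivers $G^{\otimes n}$ finitely generated as well; both directions therefore reduce to the same key assertion:

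\emph{If $G$ is finitely presented, then $\ker(\lambda_n^G)$ is finitely generated as a normal subgroup of $G^{\otimes n}$.}

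I would prove this claim by induction on $n$. For the base case $n = 2$, the exact sequence from \cite{NR2} recalled in the introduction, $1 \to \Delta(G) \to \ker(\lambda_2^G) \to H_2(G) \to 1$, reduces the task to two facts: $H_2(G)$ is finitely generated by Hopf's formula (since $G$ is finitely presented), and $\Delta(G)$ is finitely normally generated in $G \otimes G$ by $\{g_i \otimes g_i\}$ for any finite generating set $\{g_i\}$ of $G$, via a direct computation from the defining tensor relations. For the inductive step $n > 2$, the map $\lambda_n^G$ factors as
\[
G^{\otimes n} \,=\, G^{\otimes(n-1)} \otimes G \xrightarrow{\;\lambda_{n-1}^G \otimes \mathrm{id}_G\;} \gamma_{n-1}(G) \otimes G \xrightarrow{\;\mu\;} \gamma_n(G),
\]
where $\mu(x \otimes g) = [x, g]$, and right-exactness of the non-abelian tensor product places $\ker(\lambda_n^G)$ in an extension
\[
1 \longrightarrow \mathrm{im}\bigl(\ker(\lambda_{n-1}^G) \otimes G\bigr) \longrightarrow \ker(\lambda_n^G) \longrightarrow \ker(\mu) \longrightarrow 1.
\]
The left-hand piece is controlled by the inductive hypothesis together with the observation that $\ker(\lambda_{n-1}^G)$ acts trivially on $G$ (its action factors through $\lambda_{n-1}^G$), which keeps the non-abelian tensor $\ker(\lambda_{n-1}^G) \otimes G$ small enough to be finitely generated. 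The right-hand piece $\ker(\mu)$ is handled by the $n=2$ analysis adapted to the compatibly acting pair $(\gamma_{n-1}(G), G)$.

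The main obstacle is this inductive step: one must establish right-exactness of the non-abelian tensor product in precisely the required form, exploit the triviality of the action of $\ker(\lambda_{n-1}^G)$ on $G$ to bound the generators of $\ker(\lambda_{n-1}^G) \otimes G$, and produce the appropriate relative version of the $\Delta$--$H_2$ sequence for the pair $(\gamma_{n-1}(G), G)$. Once the Key Claim is secured, the standard criterion at once delivers both implications of the equivalence.
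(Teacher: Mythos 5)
Your overall route coincides with the paper's: reduce to the central extension $1 \to \ker(\lambda^G_n) \to G^{\otimes n} \to \gamma_n(G) \to 1$, prove by induction on $n$ that the kernel is finitely generated (base case via the $\Delta(G)$--$H_2(G)$ sequence, inductive step via right-exactness of $-\otimes G$ applied to $1 \to \ker(\lambda^G_{n-1}) \to G^{\otimes (n-1)} \to \gamma_{n-1}(G) \to 1$), and conclude. Two points, however, need repair.

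First, the ``standard criterion'' you invoke is false as stated. For the lamplighter group $E = C_2 \wr \mathbb{Z}$ with $N = \bigoplus_{\mathbb{Z}} C_2$ and $Q = \mathbb{Z}$, the quotient is finitely presented and $N$ is normally generated by a single element, yet $E$ is not finitely presented; so ``$N$ normally finitely generated and $Q$ finitely presented'' does not imply $E$ finitely presented, which is exactly the implication you need for the direction $\gamma_n(G)$ f.p. $\Rightarrow G^{\otimes n}$ f.p. (The converse half of your biconditional also fails: a free group surjects onto non-finitely-presented groups.) What rescues the argument is that $\ker(\lambda^G_n)$ is \emph{central} in $G^{\otimes n}$: once it is finitely generated it is a finitely generated abelian group, hence finitely presented, and the correct facts (an extension of a finitely presented group by a finitely presented group is finitely presented; a quotient of a finitely presented group by a normally finitely generated subgroup is finitely presented) yield both implications. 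You should invoke centrality explicitly rather than the criterion you wrote.

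Second, and more substantively, the piece you defer as ``the main obstacle''---finite generation of $\ker\bigl(\mu : \gamma_{n-1}(G) \otimes G \to \gamma_n(G)\bigr)$---is the real crux, and it is not merely ``the $n=2$ analysis adapted to the pair $(\gamma_{n-1}(G),G)$.'' In the base case the kernel of $G \wedge G \to G'$ is $H_2(G)$, finitely generated by Hopf's formula since $G$ is finitely presented. For the pair $(G,\gamma_{n-1}(G))$ the analogous object $\ker\bigl(G \wedge \gamma_{n-1}(G) \to \gamma_n(G)\bigr)$ is a relative Schur multiplier whose finite generation does not follow from finite presentability of $G$ alone. The paper controls it with two further inputs: Ellis's exact sequence $H_3\bigl(G/\gamma_{n-1}(G)\bigr) \to \ker\bigl(G \wedge \gamma_{n-1}(G) \to \gamma_n(G)\bigr) \to H_2(G)$, combined with the Baumslag--Cannonito--Miller theorem that polycyclic groups (here the finitely generated nilpotent quotient $G/\gamma_{n-1}(G)$) have finitely generated integral homology; the passage from the exterior kernel back to the tensor kernel is then supplied by the Whitehead $\Gamma$-functor sequence applied to the finitely generated abelian group $\gamma_{n-1}(G)/\gamma_n(G)$. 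Without these inputs (or a substitute for them) the inductive step is incomplete.
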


It is straightforward to see that the finiteness of $G^{\otimes n}$ does not imply the finiteness of $G$. For instance, if $G = C_{p^{\infty}}$ is the Pr\"ufer group, then $G^{\otimes n}$ is trivial, for any $n \geqslant 2$ (see \cite{PN} for more details). The same phenomenon occurs with polycyclic groups. We describe the following finiteness conditions: 

\begin{thm} \label{thm.finite.polycyclic}
Let $G$ be a finitely generated group. 
\begin{enumerate}
    \item The $n$-fold tensor product $G^{\otimes n}$ is finite if and only if $G$ is finite.  
    \item The $n$-fold tensor product $G^{\otimes n}$ is polycyclic if and only if $G$ is polycyclic.  
\end{enumerate}     
\end{thm}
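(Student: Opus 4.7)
The ``if'' directions of both parts will follow by induction on $n$ via the decomposition $G^{\otimes n} = G^{\otimes (n-1)} \otimes G$: for (1) I will use the result of Ellis \cite{Ellis} that the non-abelian tensor product of two finite groups acting compatibly is finite; for (2), the analogous closure property for polycyclic groups (compare \cite{Moravec.poly}).

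For the ``only if'' direction of (2), suppose $G^{\otimes n}$ is polycyclic. Since $\lambda_n^G$ is surjective, $\gamma_n(G)$ is polycyclic. The quotient $G/\gamma_n(G)$ is finitely generated and nilpotent of class at most $n-1$, hence polycyclic. As the class of polycyclic groups is closed under extensions, $G$ itself is polycyclic.

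For the ``only if'' direction of (1), suppose $G^{\otimes n}$ is finite. The argument has three stages. First, surjectivity of $\lambda_n^G$ immediately gives $\gamma_n(G)$ finite. Second, I will construct a surjective homomorphism $G^{\otimes n} \twoheadrightarrow (G^{\mathrm{ab}})^{\otimes n}$ onto the ordinary $n$-fold abelian tensor power of $G^{\mathrm{ab}}$, by applying the non-abelian tensor product functorially to the abelianization $q \colon G \to G^{\mathrm{ab}}$ at each level of the inductive definition of $G^{\otimes n}$. Since $G^{\mathrm{ab}}$ is abelian, every conjugation action becomes trivial after passing through $q$, and every commutator-type action (such as that of $G^{\mathrm{ab}} \otimes G^{\mathrm{ab}}$ on $G^{\mathrm{ab}}$) also becomes trivial; hence the non-abelian tensor of the image groups collapses to the ordinary tensor. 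Finiteness of $(G^{\mathrm{ab}})^{\otimes n}$ then forces $G^{\mathrm{ab}}$, a finitely generated abelian group, to have free rank zero, so $G^{\mathrm{ab}}$ is finite. Third, $H := G/\gamma_n(G)$ is finitely generated nilpotent with $H^{\mathrm{ab}} = G^{\mathrm{ab}}$ finite, and a standard induction on nilpotency class using the surjections $(H^{\mathrm{ab}})^{\otimes i} \twoheadrightarrow \gamma_i(H)/\gamma_{i+1}(H)$ shows $H$ is finite. Combined with the first stage, $G$ is finite.

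The main delicate point will be the second stage: verifying at each level that the tower of compatible actions used to define $G^{\otimes k}$ descends through $q$ to the (trivial) actions used in assembling $(G^{\mathrm{ab}})^{\otimes k}$, so that the successive functorial maps glue into a well-defined surjective homomorphism $G^{\otimes n} \twoheadrightarrow (G^{\mathrm{ab}})^{\otimes n}$. Once this bookkeeping is in place, the remaining steps are routine.
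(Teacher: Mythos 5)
Your proposal is correct and follows essentially the same route as the paper: the same use of Ellis (resp.\ Moravec) for the forward directions, the same surjection $G^{\otimes n}\twoheadrightarrow (G^{\mathrm{ab}})^{\otimes_{\mathbb{Z}} n}$ to force $G^{\mathrm{ab}}$ to be finite, and the same reduction combining the finiteness (resp.\ polycyclicity) of $\gamma_n(G)$ with that of the finitely generated nilpotent quotient $G/\gamma_n(G)$. The only cosmetic difference is in the last step of part (1): the paper shows each factor $\gamma_{k-1}(G)/\gamma_k(G)$ has exponent dividing $|G^{\mathrm{ab}}|$ by a direct commutator computation, whereas you invoke the standard surjections from tensor powers of the abelianization onto the lower central factors --- both are routine and equivalent in substance.
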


Let $G$ and $H$ be groups that act compatibly on each other. As usual,  the {\em derivative} of $G$ under the action of $H$, denoted by $[G,H]$, is defined to be the subgroup $\langle g^{-1}g^h \, \mid \, g \in G, h \in H\rangle$ of $G$. Similarly, the subgroup $[H,G] = \langle h^{-1}h^g \, \mid \, h \in H, g \in G \rangle$ of $H$ is called the derivative of $H$ under $G$. By \cite[Proposition 2.3~(b)]{BL}, there are epimorphisms: $\lambda \colon G \otimes H \to [G,H] $ and $\lambda' \colon G \otimes H \to [H,G]$ given by $\lambda(g \otimes h) =g^{-1}g^h $ and $\lambda'(g \otimes h)= h^{-g}h$, for each $g\in G$
and $h \in H$. In particular,  in the context of the $n$-fold tensor product, it is customary to write $\lambda_n^G$ rather than $\lambda'$. 

The authors of \cite{DLT} show that if $G$ and $H$ are finitely generated groups acting on each other compatibly, then the non-abelian tensor product $G \otimes H$ is finitely generated if and only if the derivatives $[G,H]$ and $[H,G]$ are finitely generated.  It is natural to ask whether the analogue of the previous result is true for finitely presented groups. We obtain the following related result.

\begin{thm}\label{thm:fpresult}
Let $G$ and $H$ be finitely generated groups acting compatibly on each other. If the derivative $[G,H]$ is finitely presented, then the non-abelian tensor product
$G\otimes H$ is finitely presented.    
\end{thm}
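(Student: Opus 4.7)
My plan is to deduce finite presentability of $G \otimes H$ from the short exact sequence
$$1 \longrightarrow \ker(\lambda) \longrightarrow G\otimes H \xrightarrow{\ \lambda\ } [G,H] \longrightarrow 1,$$
by invoking the classical P.~Hall extension lemma: a finitely generated group occurring as an extension of a finitely presented group by a finitely presented normal subgroup is itself finitely presented. Reducing to this, I need to establish that (a) $G\otimes H$ is finitely generated and (b) $\ker(\lambda)$ is finitely presented.

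For (a), the natural tool is the Donadze--Ladra--Thomas theorem of \cite{DLT}: when $G,H$ are finitely generated, $G\otimes H$ is finitely generated if and only if both derivatives $[G,H]$ and $[H,G]$ are finitely generated. The hypothesis immediately gives $[G,H]$ finitely generated; the remaining obligation is to verify the same for $[H,G]$. I would extract this from the compatibility relations together with the Brown--Loday structural theory of the group $\nu(G,H)$ in which $G\otimes H$ is realized as a commutator subgroup \cite{BL}, exploiting the symmetry between the roles of $\lambda$ and $\lambda'$.

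For (b), I would first apply the standard Hopf-type lemma: in an extension $1 \to N \to E \to Q \to 1$ with $E$ finitely generated and $Q$ finitely presented, the kernel $N$ is normally generated in $E$ by a finite subset. This shows $\ker(\lambda)$ is normally generated in $G\otimes H$ by finitely many elements. To promote this to finite presentability of $\ker(\lambda)$, I would invoke Brown--Loday's central-kernel theorem: the joint map $(\lambda,\lambda')\colon G\otimes H \to [G,H]\times [H,G]$ has central kernel $Z \subseteq \ker(\lambda)$. Hence $\ker(\lambda)$ sits in a central extension
$$1\longrightarrow Z \longrightarrow \ker(\lambda)\longrightarrow \ker(\lambda)/Z \longrightarrow 1$$
with $\ker(\lambda)/Z$ embedding into $[H,G]$ under $\lambda'$. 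Centrality of $Z$ in $G\otimes H$ converts finite normal-generation into finite generation of $Z$ as a group, so the abelian $Z$ is finitely presented; a parallel argument, using finite generation of $[H,G]$ from step (a), should control $\ker(\lambda)/Z$. A second application of the extension lemma then yields $\ker(\lambda)$ finitely presented, after which the outer extension lemma finishes the proof.

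The main obstacle I expect is step (b): controlling the non-abelian kernel $\ker(\lambda)$ via its central filtration and transferring the normal-generation data into a genuine finite presentation. A secondary difficulty is step (a), since finite generation of $[H,G]$ is not a direct hypothesis and must be argued from the interplay of the compatible actions via the Brown--Loday formalism.
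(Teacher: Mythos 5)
Your overall skeleton --- express $G\otimes H$ as an extension of $[G,H]$ by $\ker(\lambda)$, get finite generation of $G\otimes H$ from Donadze--Ladra--Thomas, and finish with Hall's extension lemma --- is exactly the paper's strategy. But your step (b) has a genuine gap, and the missing ingredient is precisely the fact the paper starts from: by \cite[Proposition 2.3~(d)]{BL}, $\ker(\lambda)$ is \emph{itself} contained in the centre of $G\otimes H$ (it is the kernel of the crossed module $\lambda\colon G\otimes H\to G$, and kernels of crossed modules are central). You only invoke centrality of the smaller subgroup $Z=\ker(\lambda)\cap\ker(\lambda')$, treat $\ker(\lambda)$ as ``non-abelian,'' and then your filtration argument breaks down at both stages: finite \emph{normal} generation of $\ker(\lambda)$ in $G\otimes H$ gives no finite normal generation of the subgroup $Z$, so you cannot conclude $Z$ is finitely generated; and $\ker(\lambda)/Z$ embedding into the finitely generated group $[H,G]$ controls nothing, since subgroups of finitely generated groups need not be finitely generated. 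Once you know $\ker(\lambda)\leq Z(G\otimes H)$, your own tools close the argument in one line: $G\otimes H$ finitely generated and $[G,H]$ finitely presented make $\ker(\lambda)$ the normal closure of a finite set, and a central subgroup normally generated by a finite set is generated by it, hence finitely generated abelian, hence finitely presented; the extension lemma then finishes. (Note this is where finite \emph{presentability}, not just finite generation, of $[G,H]$ is indispensable --- compare the free-group example $F\otimes F$ in the paper.)

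Two further remarks. First, the paper proves finite generation of $\ker(\lambda)$ differently: it pulls the extension back along a finite-rank free cover $F\twoheadrightarrow G\otimes H$, uses centrality to reduce to $K/[K,F]$ for $K$ the preimage of $\ker(\lambda)$... wait, of the relation subgroup over $[G,H]$, and splits this into a subgroup of $F^{\mathrm{ab}}$ plus the Schur multiplier $H_2([G,H])$, which is finitely generated by Hall's theorem \cite[14.1.5]{Rob} because $[G,H]$ is finitely presented. That Hopf-formula computation and your (repaired) normal-generation argument are essentially equivalent. Second, in step (a) you leave the finite generation of $[H,G]$ as an unproved obligation; the paper avoids this by citing the one-sided statement \cite[Proposition 5.1]{DLT}, which yields finite generation of $G\otimes H$ from finite generation of $G$, $H$ and $[G,H]$ alone, so no symmetry argument is needed.
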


The paper is organized as follows. In the next section we present the proofs of Theorems \ref{thm:fp} and \ref{thm.finite.polycyclic}. Moreover, we show that if $G$ is finitely presented, then $\ker(\lambda_n^G)$ and $n$-th nilpotent multiplier $M_n(G)$ are finitely generated, for each $n \geq 2$ (see Lemma \ref{lem:mugeneralized} and Corollary \ref{cor:n-multiplier}, below). In the third section we prove Theorem \ref{thm:fpresult}. In the final section we present some Schur-Baer type theorems for finitely presented groups.

\section{$n$-fold tensor product of groups}

\noindent {\bf Convention.} Let $H$ be a normal subgroup of $G$. Then, the following homomorphisms
$G\otimes H \to [G, H]$ and $G\wedge H \to [G, H]$, $g\otimes h \mapsto [g, h]$, $g\wedge h \mapsto [g, h]$,
for each $g\in G$, $h\in H$, will be denoted by $[\;, \;]$.

Let $M,N,P$ be groups such that we can define crossed modules $\mu : M\to P$ and $\nu : N\to P$. Observe that with those maps, we can define compatible actions between $M$ and $N$. Define the fibre product as 
$$
M\times_P N=\{(m,n)\in M\times N \, | \, \mu(m)=\nu(n)\}.
$$
The (non-abelian) exterior product $M \wedge^P N$ is obtained as follows: 
$$
M \wedge^P N= \frac{M\otimes N}{\langle m\otimes n | (m,n)\in M\times_PN\rangle^{M\otimes N}}.
$$

\begin{prop}\label{prop:sequence}
Let $M,N,P$ groups such that we can define crossed modules $\mu : M\to P$ and $\nu : N\to P$.
\begin{enumerate}
    \item (Brown and Loday, \cite[Theorem 2.12]{BL}) There is a sequence: $$
\Gamma \big(M\times_PN/\langle M,N\rangle\big) \to  M\otimes N \to
 M\wedge^P N \to 1,
$$
where $\Gamma$ is Whitehead's universal quadratic functor defined in \cite{White} and $\langle M,N\rangle$ is the image of $\lambda \times \lambda'$.
    \item Let $G$ be a group and $n$ a positive integer. Then $$
\Gamma \big(\gamma_n(G)/\gamma_{n+1}(G)\big) \to \ker \Big( [\;,\;] : G\otimes \gamma_n(G)\to \gamma_{n+1}(G)\Big) \to
$$
$$
\to \ker \Big( [\;,\;] : G\wedge \gamma_n(G)\to \gamma_{n+1}(G)\Big) \to 1.
$$
\end{enumerate}
\end{prop}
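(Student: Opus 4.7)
The plan is to specialize part (1) to the setting $M=G$, $N=\gamma_n(G)$, $P=G$, with $\mu\colon G\to G$ the identity map and $\nu\colon \gamma_n(G)\hookrightarrow G$ the inclusion. Both are manifestly crossed modules under conjugation, and the induced compatible actions of $G$ and $\gamma_n(G)$ on each other are precisely the standard ones used to form $G\otimes\gamma_n(G)$ and $G\wedge\gamma_n(G)$.

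I would first identify all the terms appearing in the sequence of part (1) under this specialization. The fibre product
$$M\times_P N=\{(g,h)\in G\times\gamma_n(G):g=h\}$$
is canonically isomorphic to $\gamma_n(G)$. Since $\lambda(g\otimes h)=g^{-1}g^h=[g,h]$ and $\lambda'(g\otimes h)=h^{-g}h=[g,h]$, the image $\langle M,N\rangle$ of $(\lambda,\lambda')$ corresponds under this isomorphism to the subgroup generated by all commutators $[g,h]$ with $g\in G$, $h\in\gamma_n(G)$, which is exactly $\gamma_{n+1}(G)$. Hence $M\times_P N/\langle M,N\rangle\cong \gamma_n(G)/\gamma_{n+1}(G)$. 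Moreover, the defining relators for $M\wedge^P N$ are precisely the elements $x\otimes x$ with $x\in\gamma_n(G)$, so $M\wedge^P N=G\wedge \gamma_n(G)$. Thus part (1) yields the exact sequence
$$\Gamma\big(\gamma_n(G)/\gamma_{n+1}(G)\big)\to G\otimes\gamma_n(G)\to G\wedge\gamma_n(G)\to 1.$$

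It then remains to transfer this to the kernel version. The commutator map $[\,,\,]\colon G\otimes\gamma_n(G)\to\gamma_{n+1}(G)$ annihilates every generator $x\otimes x$ and hence factors through a map $G\wedge\gamma_n(G)\to\gamma_{n+1}(G)$, giving a commutative triangle. Write $K_\otimes$ and $K_\wedge$ for the two kernels. The natural map $G\otimes\gamma_n(G)\to G\wedge\gamma_n(G)$ restricts to $K_\otimes\to K_\wedge$, and this restriction is surjective: any lift of $\bar{x}\in K_\wedge$ to $G\otimes\gamma_n(G)$ automatically lies in $K_\otimes$ by commutativity of the triangle. The image of $\Gamma$ in $G\otimes\gamma_n(G)$ lies in $K_\otimes$ (it maps to $0$ in $G\wedge\gamma_n(G)$ and hence to $0$ in $\gamma_{n+1}(G)$), and since $\ker(G\otimes\gamma_n(G)\to G\wedge\gamma_n(G))$ is already contained in $K_\otimes$ by the same factorization, the exactness at $G\otimes\gamma_n(G)$ in part (1) upgrades directly to exactness at $K_\otimes$ in the restricted sequence. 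The main technical point is just this last diagram chase, but it follows cleanly from the observation that $\ker(G\otimes\gamma_n(G)\to G\wedge\gamma_n(G))\subseteq K_\otimes$.
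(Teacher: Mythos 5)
Your proposal is correct and follows essentially the same route as the paper: specialize part (1) to the crossed modules given by the identity $G\to G$ and the inclusion $\gamma_n(G)\hookrightarrow G$, identify the fibre product with $\gamma_n(G)$, the image of $\lambda\times\lambda'$ with $\gamma_{n+1}(G)$, and $M\wedge^P N$ with $G\wedge\gamma_n(G)$, then restrict the resulting exact sequence to the kernels of the commutator maps. The only difference is that you spell out the final diagram chase (using $\ker\big(G\otimes\gamma_n(G)\to G\wedge\gamma_n(G)\big)\subseteq K_\otimes$), which the paper dispatches with ``Therefore, it follows that''.
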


\begin{proof}
(2) Adjusting the first item to our case, let $inc:\gamma_n(G)\to G$ and $Id:G\to G$, then we have that,
$$
\gamma_n(G)\times_GG=\{(g,h)\in \gamma_n(G)\times G |\ g=h\}= \gamma_n(G),
$$
and for any $g\in \gamma_n(G)$, $h\in G$,
$$
\lambda \times \lambda'(g\otimes h)= ([g,h],[g,h]).
$$
Hence, $\langle\gamma_n(G),G\rangle \cong \gamma_{n+1}(G)$. Finally,
$$
\gamma_n(G)\wedge^GG=\frac{\gamma_n(G)\otimes G}{\langle g\otimes h | g=h\rangle^{\gamma_n(G)\otimes G}}=\gamma_n(G)\wedge G.
$$
By the previous item, we obtain the following sequence: 
$$
\Gamma \big(\gamma_n(G)/\gamma_{n+1}(G)\big) \to  \gamma_n(G)\otimes G \to
 \gamma_n(G)\wedge G \to 1
$$

Therefore, it follows that,
$$
\Gamma \big(\gamma_n(G)/\gamma_{n+1}(G)\big) \to \ker \Big( [\;,\;] : G\otimes \gamma_n(G)\to \gamma_{n+1}(G)\Big) \to
$$
$$
\to \ker \Big( [\;,\;] : G\wedge \gamma_n(G)\to \gamma_{n+1}(G)\Big) \to 1.
$$
\end{proof}

First we will establish that if $G$ is finitely presented, then so is the kernel $\ker(\lambda^G_{n})$. Recall that $G^{\otimes n}$ is a central extension of $\ker(\lambda^G_{n})$ by $\gamma_{n}(G)$.

\begin{lem} \label{lem:mugeneralized}
Let $G$ be a finitely presented group. Then the kernel $\ker(\lambda^G_{n})$ is finitely generated,
for each $n\geq 2$.    
\end{lem}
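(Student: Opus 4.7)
The plan is induction on $n \geq 2$. For the base case $n=2$, I would invoke the exact sequence $1 \to \Delta(G) \to \ker(\lambda_2^G) \to H_2(G) \to 1$ cited in the introduction from \cite{NR2}. Both outer terms are finitely generated: Hopf's formula gives that $H_2(G)$ is finitely generated because $G$ is finitely presented, and $\Delta(G)=\langle g\otimes g\mid g\in G\rangle$ is finitely generated whenever $G$ is, since the defining relations of the tensor square reduce every $g\otimes g$ to a product of tensors involving only a chosen finite generating set of $G$.

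For $n \geq 3$, I would exploit that $\lambda_{n-1}^G$ is a crossed module over $G$, so tensoring in the first factor with $\mathrm{id}_G$ yields a surjection $\pi\colon G^{\otimes n}=G^{\otimes n-1}\otimes G \twoheadrightarrow \gamma_{n-1}(G)\otimes G$, and $\lambda_n^G=[\,,\,]\circ\pi$. Via the symmetry $\gamma_{n-1}(G)\otimes G\cong G\otimes\gamma_{n-1}(G)$, this produces the short exact sequence
\[
1 \to \ker(\pi) \to \ker(\lambda_n^G) \to \ker\bigl([\,,\,]\colon G\otimes\gamma_{n-1}(G)\to\gamma_n(G)\bigr) \to 1,
\]
so it would suffice to show that both outer terms are finitely generated.

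The right-hand kernel is the middle term in Proposition \ref{prop:sequence}(2) applied with $n-1$ in place of $n$; it is therefore sandwiched between a quotient of $\Gamma(\gamma_{n-1}(G)/\gamma_n(G))$---finitely generated since $\gamma_{n-1}(G)/\gamma_n(G)$ is a finitely generated abelian group and Whitehead's $\Gamma$ preserves finite generation---and the exterior kernel $\ker([\,,\,]\colon G\wedge\gamma_{n-1}(G)\to\gamma_n(G))$. For $\ker(\pi)$, the fact that $\ker(\lambda_{n-1}^G)$ is central in $G^{\otimes n-1}$ makes this kernel a homomorphic image of the ordinary abelian tensor product $\ker(\lambda_{n-1}^G)\otimes_{\mathbb{Z}}G^{\mathrm{ab}}$, which is finitely generated by the inductive hypothesis together with finite generation of $G^{\mathrm{ab}}$.

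The chief obstacle is establishing finite generation of the exterior-product kernel $\ker([\,,\,]\colon G\wedge\gamma_{n-1}(G)\to\gamma_n(G))$: this is a Baer-type invariant of the pair $(G,\gamma_{n-1}(G))$, and it is precisely at this step that the finite presentation of $G$ (rather than mere finite generation) is essential, through a Hopf-style description using a finite free presentation $F/R$ of $G$.
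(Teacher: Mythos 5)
Your strategy coincides with the paper's: induction on $n$, the same base case via $1\to\Delta(G)\to\ker(\lambda_2^G)\to H_2(G)\to 1$, and the same decomposition of $\ker(\lambda_n^G)$ into the image of $\ker(\lambda_{n-1}^G)\otimes G$ and $\ker\bigl([\,,\,]\colon \gamma_{n-1}(G)\otimes G\to\gamma_n(G)\bigr)$, with Proposition \ref{prop:sequence}(2) reducing the latter to the exterior-product kernel. But the proof is not complete: the step you yourself flag as ``the chief obstacle'' --- finite generation of $\ker\bigl([\,,\,]\colon G\wedge\gamma_{n-1}(G)\to\gamma_n(G)\bigr)$ --- is exactly the point where all the work lies, and a vague appeal to ``a Hopf-style description using a finite free presentation'' does not discharge it. The paper resolves it with a specific homological input, namely Ellis's exact sequence (\cite[Remark 3]{E})
$$
H_3\bigl(G/\gamma_{n-1}(G)\bigr)\to \ker\Bigl([\,,\,]\colon G\wedge\gamma_{n-1}(G)\to\gamma_n(G)\Bigr)\to H_2(G),
$$
together with two finiteness facts: $H_2(G)$ is finitely generated because $G$ is finitely presented (Hopf), and $H_3\bigl(G/\gamma_{n-1}(G)\bigr)$ is finitely generated because $G/\gamma_{n-1}(G)$ is finitely generated nilpotent, hence polycyclic, and polycyclic groups have finitely generated integral homology in every degree (\cite[Corollary 5.5]{BCM}). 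Without this sequence (or an equivalent), the induction does not close.

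A secondary issue: your claim that $\ker(\pi)$ is a homomorphic image of $\ker(\lambda_{n-1}^G)\otimes_{\mathbb{Z}}G^{\mathrm{ab}}$ tacitly assumes that $G$ acts trivially on $\ker(\lambda_{n-1}^G)$; only the trivial action of $\ker(\lambda_{n-1}^G)$ on $G$ is immediate from centrality. The paper avoids having to justify this by invoking Guin's identification $A\otimes G\cong A\otimes_G I(G)$ for an abelian group $A$ acting trivially on $G$, and then using that $I(G)$ is a finitely generated $G$-module; together with the inductive hypothesis this yields finite generation of $\ker(\lambda_{n-1}^G)\otimes G$ regardless of how $G$ acts on it. You should either prove the triviality of that action or route the argument through the augmentation ideal as the paper does.
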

\begin{proof}
The proof is by induction on $n$. Assume that $n=2$. It is well known that $\ker \Big( [\;,\;] : G\wedge G \to [G, G] \Big)$ is isomorphic to  $H_2(G)$
which is finitely generated because $G$ is finitely presented \cite[14.1.5]{Rob}. On the other hand, by \cite[Proposition 2.8]{NR2} we have a short exact sequence:
$$
1 \to \Delta(G) \to  \ker \Big( \lambda^G_{2} : G\otimes G \to [G, G] \Big) \to H_2(G) \to 1,
$$
where $\Delta(G) = \langle g \otimes g \ \mid \ g \in G \rangle$. Since $G^{ab}$ is finitely generated, $\Delta(G^{ab})$
is also finitely generated \cite[Section 3]{NR2}, which implies that $\ker \Big( \lambda^G_{2} : G\otimes G \to [G, G] \Big)$ is finitely generated.

Now, assuming that the lemma holds for $\lambda^G_{n}$,
we will prove the same for $\lambda^G_{n+1}$. Consider the central extension of groups
$$
1 \to \ker \Big( \lambda^G_{n}\Big) \to G^{\otimes n} \to \gamma_n(G) \to 1. 
$$
By \cite[Proposition 9]{BJR},  
$$
\Big(\ker\Big( \lambda^G_{n}\Big) \otimes G \Big) \to G^{\otimes n} \otimes G  \to \gamma_n(G) \otimes G \to 1
$$ is an exact sequence. 
Denote the homomorphism  $ \ker\Big( \lambda^G_{n}\Big) \otimes G  \to   G^{\otimes n} \otimes G$ 
by $\sigma$, and its image  by $\sigma \Big( \ker\Big( \lambda^G_{n}\Big) \otimes G \Big)$. Then we have the following commutative
diagram with exact rows:
\[
\xymatrix{
  1 \ar[r] & \sigma \Big( \ker\Big( \lambda^G_{n}\Big) \otimes G \Big) \ar[d]_-{ } \ar[r]^-{ } & G^{\otimes n+1}
  \ar[d]_-{\lambda^G_{n+1}} \ar[r]^-{ } & \gamma_n(G) \otimes G \ar[d]^-{[ \;, \;]} \ar[r] & 1 \\
  1 \ar[r] & 1 \ar[r]_-{} & \gamma_{n+1}(G) \ar[r]_-{Id} & \gamma_{n+1}(G) \ar[r] & 1,
}
\]
which yields the following exact sequence:
$$
 \sigma \Big( \ker\Big( \lambda^G_{n}\Big) \otimes G\Big) \hookrightarrow \ker\Big( \lambda^G_{n+1}\Big) \twoheadrightarrow
\ker \Big( [\;,\;] :  \gamma_n(G) \otimes G\to \gamma_{n+1}(G)\Big).
$$
We will show that the first and the last terms in this exact sequence are finitely generated.
Since $\ker\Big( \lambda^G_{n}\Big)$ is an abelian group and acts trivially on $G$,
by \cite[Proposition 3.2]{G} we have:
$$
 \ker\Big( \lambda^G_{n}\Big) \otimes G =  \ker\Big( \lambda^G_{n}\Big) \otimes_G I(G),
$$
where $I(G)$ denotes the augmentation ideal of $G$. Since $G$ is finitely generated, $I(G)$ is a finitely
generated $G$-module. By the induction hypothesis we get that $ \ker\Big( \lambda^G_{n}\Big) \otimes_G G$
is a finitely generated group. Hence, $\sigma \Big( \ker\Big( \lambda^G_{n}\Big) \otimes G \Big)$ is
finitely generated. Moreover, by Proposition \ref{prop:sequence}, there is an exact sequence:
$$
\Gamma \big(\gamma_n(G)/\gamma_{n+1}(G)\big) \to \ker \Big( [\;,\;] : G\otimes \gamma_n(G)\to \gamma_{n+1}(G)\Big) \to
$$
$$
\to \ker \Big( [\;,\;] : G\wedge \gamma_n(G)\to \gamma_{n+1}(G)\Big) \to 1.
$$

Since $\Gamma \big(\gamma_n(G)/\gamma_{n+1}(G)\big)$ is finitely
generated, it suffices to show that  $\ker \Big( [\;,\;] : G\wedge \gamma_n(G)\to \gamma_{n+1}(G)\Big)$ is finitely
generated. For the latter, we will use the following exact sequence (see \cite[Remark 3]{E}):
$$
H_3\big(G/\gamma_n(G)\big)\to  \ker \Big( [\;,\;] : G\wedge \gamma_n(G)\to \gamma_{n+1}(G)\Big) \to H_2(G).
$$
We have already pointed out that $H_2(G)$ is finitely generated. Moreover, since $G/\gamma_n(G)$ is polycyclic,
by \cite[Corollary 5.5]{BCM} $H_3\big(G/\gamma_n(G)\big)$ is finitely generated, which completes the proof.     
\end{proof}

We are now in a position to prove Theorem \ref{thm:fp}. 

\begin{proof}[Proof of Theorem \ref{thm:fp}]
Recall that $G$ is finitely presented. We need to show that the $n$-th term of the lower central series $\gamma_n(G)$ is finitely presented if and only if $G^{\otimes n}$ is finitely presented.  

The subgroup $\ker \Big( \lambda^G_n : G^{\otimes n} \to \gamma_n(G)\Big)$ is a central subgroup of $G^{\otimes n}$ and, by Lemma \ref{lem:mugeneralized}, it is finitely presented. Consequently, the following extension
$$
1\to \ker \Big( \lambda^G_n : G^{\otimes n} \to \gamma_n(G)\Big) \to G^{\otimes n} \to \gamma_n(G)\to 1 ,
$$
implies that the $n$-fold tensor product $G^{\otimes n}$ is finitely presented if and only if $\gamma_n(G)$ is finitely presented.
\end{proof}

Let $G$ be a group and let $F$ be a free group such that $G=F/R$ for some normal subgroup $R$ of $F$. The $n$-th nilpotent multiplier $M_n(G)$ is defined by $$ M_n(G) = \dfrac{R \cap \gamma_n(F)}{\gamma_n(R,F)},$$ where $\gamma_1(R, F)=R$ and
$\gamma_{k+1}(R, F)=[\gamma_k(R, F), F]$. In particular, $M_1(G) \cong H_2(G)$ is the Schur multiplier of $G$. 

According to Hall \cite[14.1.5]{Rob} the Schur multiplier of a finitely presented group $G$, $M_1(G)$, is finitely generated. We obtain the following related result. 

\begin{cor} \label{cor:n-multiplier}
Let $G$ be a finitely presented group. Then the $n$-th nilpotent multiplier $M_n(G)$ is finitely generated for all $n\geq 1$.    
\end{cor}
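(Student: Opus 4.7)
The plan is to invoke a classical identification, due to Ellis, of the $n$-th nilpotent multiplier with a kernel whose finite generation has already been handled in the previous lemma. Concretely, I would use
\[
M_n(G) \;\cong\; \ker\Big([\,,\,]:G\wedge \gamma_n(G)\longrightarrow \gamma_{n+1}(G)\Big),
\]
which for $n=1$ recovers Miller's isomorphism $M_1(G)\cong H_2(G)\cong \ker([\,,\,]:G\wedge G\to [G,G])$.

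Granted this identification, the corollary reduces at once to a piece of the argument already given in the proof of Lemma \ref{lem:mugeneralized}. The exact sequence (from \cite[Remark 3]{E})
\[
H_3(G/\gamma_n(G))\longrightarrow\ker\Big([\,,\,]:G\wedge \gamma_n(G)\to\gamma_{n+1}(G)\Big)\longrightarrow H_2(G)
\]
forces the middle term to be finitely generated: the right-hand term $H_2(G)$ is finitely generated by Hall's theorem \cite[14.1.5]{Rob} (as $G$ is finitely presented), and the left-hand term $H_3(G/\gamma_n(G))$ is finitely generated by \cite[Corollary 5.5]{BCM} because $G/\gamma_n(G)$ is a finitely generated nilpotent, hence polycyclic, group. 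Combining these two inputs, the middle term is finitely generated, and therefore so is $M_n(G)$.

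The only point that requires genuine care — and what I expect to be the main obstacle in writing this out cleanly — is matching the indexing conventions: one has to verify that the form of Ellis' isomorphism cited above agrees with the definition $M_n(G)=(R\cap \gamma_n(F))/\gamma_n(R,F)$ used here (for instance, checking that $n=1$ truly gives the Schur multiplier as the paper asserts). Once this dictionary is pinned down, no new ingredient is required: the argument is essentially a direct transcription of the finite-generation estimate already produced inside the induction step of Lemma \ref{lem:mugeneralized}.
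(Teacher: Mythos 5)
There is a genuine gap: the identification you start from is false. The group $\ker\big([\,,\,]\colon G\wedge \gamma_n(G)\to\gamma_{n+1}(G)\big)$ is a \emph{relative} Schur-multiplier-type object attached to the normal subgroup $\gamma_n(G)$; it is not the $n$-th nilpotent multiplier. A concrete counterexample: take $G=\mathbb{Z}^2$ and $n=2$. Then $\gamma_2(G)=1$, so $G\wedge\gamma_2(G)$ is trivial and your right-hand side vanishes; but $M_2(\mathbb{Z}^2)$ is nontrivial (under the paper's literal definition it is $H_2(\mathbb{Z}^2)\cong\mathbb{Z}$ by Hopf's formula, and under the shifted convention in which $M_1=H_2$ it is $\gamma_3(F)/\gamma_4(F)\cong\mathbb{Z}^2$ for $F$ free of rank $2$). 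So no adjustment of indexing conventions can rescue the isomorphism: the discrepancy is not an off-by-one but a confusion between $G\wedge\gamma_n(G)$ and the \emph{iterated} $(n+1)$-fold exterior/tensor power $((G\otimes G)\otimes\cdots)\otimes G$, whose kernels over $\gamma_{n+1}(G)$ are genuinely different groups. Consequently the finite-generation argument you transcribe from the induction step of Lemma \ref{lem:mugeneralized}, while correct in itself, establishes finite generation of the wrong group.

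The paper's actual proof is shorter and uses a different input: by Burns--Ellis \cite{BE}, $M_n(G)$ is an \emph{epimorphic image} of $\ker\big(\lambda^G_{n+1}\colon G^{\otimes n+1}\to\gamma_{n+1}(G)\big)$ (note: the $(n+1)$-fold tensor product, not $G\otimes\gamma_n(G)$), and that kernel is finitely generated by Lemma \ref{lem:mugeneralized}; a quotient of a finitely generated abelian group is finitely generated. If you want to keep your strategy, you would need to replace your claimed isomorphism by this surjection from $\ker(\lambda^G_{n+1})$, at which point the exact sequence from \cite[Remark 3]{E} is no longer the relevant tool and the full strength of Lemma \ref{lem:mugeneralized} (including the $\sigma(\ker(\lambda^G_n)\otimes G)$ term and the $\Gamma$-functor term, not just the wedge-kernel piece) is what does the work.
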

\begin{proof}
If $n=1$, then $M_1(G)$ is finitely generated \cite[14.1.5]{Rob}. Now,  we can assume $n\geq 2$. 
By \cite{BE}, $M_n(G)$ is an epimorphic image of $\ker \Big( \lambda^G_{n+1}: G^{\otimes n+1} \to \gamma_{n+1}(G)\Big)$. By Lemma \ref{lem:mugeneralized}, $M_n(G)$ is (abelian) finitely generated. 
\end{proof}

Let $\nu(G)$ be the group defined in \cite{NR1} as \[ \nu (G):= \langle 
G \cup G^{\varphi} \ \vert \ [g_1,{g_2}^{\varphi}]^{g_3}=[{g_1}^{g_3},({g_2}^{g_3})^{\varphi}]=[g_1,{g_2}^{\varphi}]^{{g_3}^{\varphi}},
\; \ g_i \in G \rangle .\]

The motivation for studying $\nu(G)$ is the commutator connection: indeed, the map  $\Phi: G \otimes G \rightarrow [G, G^{\varphi}]$,
defined by $g \otimes h \mapsto [g , h^{\varphi}]$, for all $g, h \in G$, is an isomorphism  \cite[Proposition 2.6]{NR1} (see also Ellis and Leonard \cite{EL})).

The following result is an immediate consequence of Lemma \ref{lem:mugeneralized}. 

\begin{cor}
Let $G$ be a finitely presented group. Assume that the derived subgroup $G'$ is finitely presented. Then $\nu(G)$ and $\nu(G)'$ are finitely presented.  
\end{cor}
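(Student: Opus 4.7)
The plan is to assemble two short exact sequences of finitely presented groups and invoke the standard fact that a group extension of a finitely presented group by a finitely presented normal subgroup is again finitely presented.

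The starting point is the isomorphism $\Phi : G\otimes G\to [G,G^{\varphi}]$ from \cite[Proposition 2.6]{NR1}, together with the defining structure of $\nu(G)$: the assignments $g\mapsto (g,1)$ and $g^{\varphi}\mapsto (1,g)$ extend to an epimorphism $\nu(G)\twoheadrightarrow G\times G$ whose kernel is precisely $[G,G^{\varphi}]$, yielding the short exact sequence
$$
1\longrightarrow [G,G^{\varphi}]\longrightarrow \nu(G)\longrightarrow G\times G\longrightarrow 1.
$$
Since $G$ is finitely presented and $\gamma_2(G)=G'$ is finitely presented by hypothesis, Theorem \ref{thm:fp} (with $n=2$) gives that $G^{\otimes 2}=G\otimes G$ is finitely presented, and hence so is $[G,G^{\varphi}]$ via $\Phi$. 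The quotient $G\times G$ is obviously finitely presented. Consequently $\nu(G)$ sits in an extension of finitely presented groups and is therefore finitely presented.

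For the derived subgroup, note that every generator $[g,h^{\varphi}]$ of $[G,G^{\varphi}]$ is a commutator in $\nu(G)$, so $[G,G^{\varphi}]\subseteq \nu(G)'$. Passing to the quotient by $[G,G^{\varphi}]$ we obtain
$$
\nu(G)'/[G,G^{\varphi}]\;\cong\;(G\times G)'\;=\;G'\times G',
$$
which is finitely presented by assumption. Thus
$$
1\longrightarrow [G,G^{\varphi}]\longrightarrow \nu(G)'\longrightarrow G'\times G'\longrightarrow 1
$$
is again an extension whose kernel and quotient are finitely presented, so $\nu(G)'$ is finitely presented as well.

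There is essentially no hard step: once Theorem \ref{thm:fp} is available, the only tools needed are the isomorphism $G\otimes G\cong [G,G^{\varphi}]$, the standard decomposition $\nu(G)\twoheadrightarrow G\times G$ with kernel $[G,G^{\varphi}]$, and the routine closure of finite presentability under extensions. The mildly delicate point one has to check is that $[G,G^{\varphi}]\subseteq \nu(G)'$ and that it is the kernel of the map to $G\times G$, both of which are immediate from the definitions in \cite{NR1}.
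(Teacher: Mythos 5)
Your proof is correct. For $\nu(G)$ you use exactly the paper's sequence $1 \to G\otimes G \to \nu(G) \to G\times G \to 1$ (identifying $G\otimes G$ with $[G,G^{\varphi}]$ via Rocco's isomorphism) together with Theorem \ref{thm:fp} for $n=2$, so that half coincides with the paper. For $\nu(G)'$ you take a slightly different route: instead of quoting Rocco's sequence $1 \to \ker(\lambda_2^G) \to \nu(G)' \to G'\times G'\times G' \to 1$ as the paper does, you observe that $[G,G^{\varphi}]\subseteq \nu(G)'$ and derive $1 \to [G,G^{\varphi}] \to \nu(G)' \to G'\times G' \to 1$ directly from the first sequence. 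Both arguments are valid; yours has the small advantage of being self-contained modulo the single sequence for $\nu(G)$ (you never need Lemma \ref{lem:mugeneralized} or the finer three-factor decomposition of $\nu(G)'/\ker(\lambda_2^G)$), at the cost of requiring the full strength of ``$G\otimes G$ is finitely presented'' for the kernel, whereas the paper only needs the finitely generated abelian group $\ker(\lambda_2^G)$ there. Under the stated hypotheses this makes no difference, so the proposal stands as written.
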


\begin{proof}
By \cite{NR2}, there are short exact sequences:
\[
1 \to \ker(\lambda_2^G) \to \nu(G)' \to G' \times G' \times G' \to 1 
\]  
and
\[
1 \to G \otimes G \to \nu(G) \to G \times G \to 1. 
\]  
Applying Lemma \ref{lem:mugeneralized} and Theorem \ref{thm:fp} with $n=2$, we obtain that $\ker(\lambda_2^G)$ and $G \otimes G$ are finitely presented. Consequently, $\nu(G)'$ and $\nu(G)$ are finitely presented. 
\end{proof}

\begin{rem}
It is worthy to mention that part of the previous result is already known. More precisely, Kochloukova and Sidki proved that $G$ is finitely presented if and only if $\nu(G)$ is finitely presented \cite[Theorem E]{KS}.  
\end{rem}

Now we will deal with Theorem \ref{thm.finite.polycyclic}: {\it Let $G$ be a finitely generated.
\begin{enumerate}
    \item The $n$-fold tensor product $G^{\otimes n}$ is finite if and only if $G$ is finite.  
    \item The $n$-fold tensor product $G^{\otimes n}$ is polycyclic if and only if $G$ is polycyclic.  
\end{enumerate}  
}

\begin{proof}[Proof of Theorem \ref{thm.finite.polycyclic}]
First we consider the item (1). Assume that $G$ is finite. Combining \cite{Ellis} and \cite[Proposition 5]{BJR}, we deduce that $G^{\otimes n}$ is finite. 

Conversely, assume that $G^{\otimes n}$ is finite. First we prove that $G^{ab}$ is finite. As $G$ is finitely generated, we deduce that $G^{ab}$ is finitely generated and  $G^{ab} \cong T \times F,$ where $T$ is the torsion part and $F$ the free part of $G^{ab}$. In particular, $T$ is finite. There exists an epimorphism  $G^{\otimes n} \to (G^{ab})^{\otimes n}$. Moreover, by \cite{BL}, $(G^{ab})^{\otimes n}\cong (G^{ab})^{\otimes_{\mathbb{Z}} n}$, where ``$\otimes_{\mathbb{Z}}$'' denotes the usual tensor product of $\mathbb{Z}$-modules. Now, if $F$ is infinite, then so is
$(G^{ab})^{\otimes_{\mathbb{Z}} n}$, which implies that $G^{\otimes n}$ is infinite too, a contradiction.

Now, we need to show that the derived subgroup $G'$ is finite. Since $\lambda_n^G \colon G^{\otimes n} \to \gamma_n(G)$ is an epimorphism, we deduce that $\gamma_n(G)$ is finite. Consequently, it suffices to prove that if $\gamma_k(G)$ is finite, then so is $\gamma_{k-1}(G)$, where $k$ is a positive integer with $n \leq k < 2$. To prove that $\gamma_k(G)$ is finite is enough to show that $\gamma_{k-1}(G)/\gamma_{k}(G)$ is finite. Since $\gamma_k(G)/\gamma_{k+1}(G)$ is a finitely generated abelian group, is sufficient to show that 
\begin{eqnarray*} \label{eq:commutator}
    [g_1,g_2, \ldots, g_{k-1}^m]\gamma_{k}(G) & = & [g_1,g_2, \ldots, g_{k-1}]^m\gamma_{k}(G),
\end{eqnarray*}
where $m$ is a positive integer and $g_1, g_2, \ldots g_{k-1} \in G$. To this end, we proceed by induction on $m$. The formula is obvious if $m=1$. 
Assume the formula is valid up to $m-1$; we will prove it for $m$.
\begin{eqnarray*}
[g_1, \ldots, g_{k-1}^m] & = & [g_1, \ldots, g_{k-1}][g_1, \ldots, g_{k-1}^{m-1}]^{g_{k-1}} \\
& = & [g_1, \ldots, g_{k-1}][g_1, \ldots, g_{k-1}^{m-1}][g_1, \ldots, g_{k-1}^{m-1},g_{k-1}] \\
& \equiv & [g_1, \ldots, g_{k-1}]^{m} \mod   \gamma_k(G)
\end{eqnarray*}

Let $|G^{ab}|=m$, then any $m$ power of an element of $G$ is an element of $G'$, so we have that the commutator $[g_1, \ldots, g_{k-1}]\gamma_k(G) $ has order dividing $m$. We conclude that $G'$ is finite, which completes the proof. \vspace{0,5cm}

Now we deal with item (2). Assume that $G$ is polycyclic. According to Moravec's theorem \cite{Moravec.poly}, we deduce that the $n$-fold tensor product $G^{\otimes n}$ is polycyclic. 

Conversely, assume that $G^{\otimes n}$ is polycyclic. Therefore, $\gamma_n(G)$ is polycyclic. Since polycyclic is a class closed to extension, it suffices to prove that $G/\gamma_n(G)$ is polycyclic. Note that $G/\gamma_n(G)$ is nilpotent finitely generated, in particular, polycyclic.  
\end{proof}

\begin{rem}
It is worthy to mention that the case $n=2$ in Theorem \ref{thm.finite.polycyclic}~(1) was considered in \cite[Theorem 3.1]{PN}.
\end{rem}

\section{On the non-abelian tensor product of finitely presented group}

In this section, we consider the influence of specific derivative subgroup in the structure of the non-abelian tensor product. This approach is mainly motivated by the following works: Brown, Johnson and Robertson \cite[Section 8]{BJR}, Nakaoka \cite{Nak}, Visscher \cite{V} and Donadze, Ladra and Thomas \cite[Section 3]{DLT}. 

\begin{proof}[Proof of Theorem \ref{thm:fpresult}]
Recall that $G$ and $H$ are finitely generated groups acting compatibly on each other, and the derivative $[G,H]$ is finitely presented. We need to prove that the non-abelian tensor product $G \otimes H$ is finitely presented. 

By \cite[Propostion 2.3 (b) and (d)]{BL}, the non-abelian tensor product $G \otimes H$ is a central extension of $\ker(\lambda)$ by $[G,H]$, where $\lambda \colon G \otimes H \to [G,H]$, $g \otimes h \mapsto g^{-1}g^h$, is an epimorphism. According to \cite[Proposition 5.1]{DLT}, we deduce that $G \otimes H$ is finitely generated. Thus, it suffices to prove that $\ker(\lambda)$ is finitely generated.

As $G \otimes H$ is finitely generated, we have an epimorphism $p \colon F \to G \otimes H$, where $F$ is a free group of finite ranking. Define $\lambda' \colon F \to [G,H]$ as the composition $\lambda \circ p$. Then, from the following commutative diagram:

\[
\xymatrix{
  1 \ar[r] & \ker(\lambda')\ar[d]_-{ } \ar[r]^-{ } & F
  \ar[d]_-{p} \ar[r]^-{\lambda '} & [G,H] \ar[d]^-{{Id}_{[G,H]} } \ar[r] & 1 \\
  1 \ar[r] & \ker(\lambda) \ar[r]_-{} & G\otimes H \ar[r]_-{\lambda} & [G,H] \ar[r] & 1,
}
\]
we get an epimorphism $\ker(\lambda') \to \ker(\lambda) $ induced by $p \colon F \to G \otimes H$. Moreover, as $\ker(\lambda) \leq Z(G \otimes H)$, we have an epimorphism $$
\dfrac{ker(\lambda')}{[ker(\lambda'),F]} \to \dfrac{\ker(\lambda)}{[ker(\lambda),G \otimes H]} =\ker(\lambda).
$$
Set $K = \ker(\lambda')$. Now, it remains to show that $K/(K \cap F')$ and $(K \cap F')/([K,F])$ are finitely generated. Consider the following lattice of subgroups:
\[
\xymatrix{
& F \ar@{-}[d] & \\
& K F' \ar@{-}[dr] \ar@{-}[dl]  & \\
K \ar@{-}[dr] & & F' \ar@{-}[dl] \\
& K \cap F' \ar@{-}[d] & \\
& [K,F] & 
}
\]

Note that $K/(K \cap F')$ is isomorphic to a subgroup of the abelianization $F^{ab}$ and so, finitely generated. Moreover, by \cite[11.4.15]{Rob}, $(K \cap F')/([K,F])$ is isomorphic to the Schur multiplier $M(F/K) = M([G,H])$. According to Hall's theorem, the Schur multiplier $M([G,H])$ is finitely generated \cite[14.1.5]{Rob}. Consequently, $\ker(\lambda)$ is finitely presented, the proof is complete. 
\end{proof}

\begin{rem}
It is well-known that the finite presentability of the groups $G$ and $H$ does not imply the finite presentability of the non-abelian tensor product $G \otimes H$. For instance, by \cite[Proposition 6]{BJR}, if $F$ is a free group of rank $n \geq 2$, then the non-abelian tensor square $F \otimes F \cong F' \times \mathbb{Z}^{\frac{n(n+1)}{2}},$ where $F'$ is a free group of countably infinite rank. 
\end{rem}

\begin{thm}
Let $G$ and $H$ be groups acting compatibly on each other. Assume that $G$ is finite and $H$ finitely generated. Then the non-abelian tensor product $G \otimes H$ is finitely presented. Moreover, every subgroup of $G\otimes H$ is finitely presented. 
\end{thm}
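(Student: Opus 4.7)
The plan is to deduce the finite presentability of $G\otimes H$ directly from Theorem \ref{thm:fpresult}, and then to upgrade the argument to show that $G\otimes H$ is in fact polycyclic; the second assertion will then follow because every subgroup of a polycyclic group is again polycyclic, and in particular finitely generated and finitely presented.

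Since $G$ is finite, the derivative $[G,H]$ is a subgroup of $G$ and is therefore finite, hence trivially finitely presented. Because $G$ and $H$ are finitely generated, Theorem \ref{thm:fpresult} applies and yields that the non-abelian tensor product $G\otimes H$ is finitely presented. This disposes of the first half of the statement.

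For the ``every subgroup'' assertion, I would revisit the proof of Theorem \ref{thm:fpresult} to extract more than mere finite presentability of the whole group. In that proof, the kernel $\ker(\lambda)$ of $\lambda\colon G\otimes H\to [G,H]$ is realized as a quotient of $K/[K,F]$, where $F$ is a finitely generated free group surjecting onto $G\otimes H$ and $K=\ker(\lambda')$. The pieces $K/(K\cap F')\hookrightarrow F^{ab}$ and $(K\cap F')/[K,F]\cong M([G,H])$ are both finitely generated (indeed, the latter is finite since $[G,H]$ is finite), so $\ker(\lambda)$ is a finitely generated abelian group, abelian because it sits in the centre of $G\otimes H$. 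The central extension
\[
1\to \ker(\lambda)\to G\otimes H\to [G,H]\to 1
\]
thus presents $G\otimes H$ as a central extension of a finitely generated abelian group by a finite group, and hence $G\otimes H$ is polycyclic. Since every subgroup of a polycyclic group is polycyclic and therefore finitely presented, every subgroup of $G\otimes H$ is finitely presented.

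The only point demanding genuine care, and arguably the main obstacle, is to verify that the argument behind Theorem \ref{thm:fpresult} really produces the finite generation of $\ker(\lambda)$ as an abelian group and not merely the finite presentability of $G\otimes H$ as a whole; this verification is essentially encoded in the three-term identification via $K/(K\cap F')$ and $(K\cap F')/[K,F]$ described above, which is why I would lean on the internal mechanics of that proof rather than invoking it as a black box.
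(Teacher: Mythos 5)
Your proposal is correct and follows essentially the same route as the paper: reduce the first assertion to Theorem \ref{thm:fpresult} via the finiteness of $[G,H]\leq G$, then control the central extension $1\to\ker(\lambda)\to G\otimes H\to[G,H]\to 1$ to get the subgroup statement. Two small remarks. First, you do not need to reopen the internal mechanics of the proof of Theorem \ref{thm:fpresult} to see that $\ker(\lambda)$ is finitely generated: since $G\otimes H$ is finitely generated and $\ker(\lambda)$ has finite index (at most $|[G,H]|$) in it, $\ker(\lambda)$ is automatically finitely generated, and being central it is finitely generated abelian; this is the shortcut the paper takes. Second, your conclusion that $G\otimes H$ is \emph{polycyclic} is slightly too strong: a central extension of a finitely generated abelian group by a finite group is polycyclic-by-finite, but need not be polycyclic when the finite quotient $[G,H]$ is not solvable. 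This does not affect the argument, since every subgroup of a polycyclic-by-finite group is again polycyclic-by-finite and hence finitely presented, which is exactly how the paper concludes.
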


\begin{proof}
By Theorem \ref{thm:fpresult}, the non-abelian tensor product $G \otimes H$ is finitely presented. Now, we will show that every subgroup of $G \otimes H$ is finitely presented. Indeed, consider the following
exact sequence: $$ 1 \to \ker(\lambda) \to G \otimes H \to [G,H] \to 1,$$ where $[G,H] \leq G$ is the derivative of $G$ under the action of $H$. As $G \otimes H$ is finitely generated, $\ker(\lambda) \leq Z(G \otimes H)$ and the index $[G \otimes H : \ker(\lambda)] \leq |[G,H]|< \infty$, we have the kernel $\ker(\lambda)$ is finitely generated. Therefore $G \otimes H$ is polycyclic-by-finite and the result follows.
\end{proof}

\section{Schur-Baer type theorem
for finitely presented groups}

In this section, we will use the approach of \cite{DLP} and \cite{DM} to prove a Schur-Baer type theorem
for finitely presented groups.

Recall that the group class $\mathfrak{X}$ is called a Schur class if for any group $G$ such that the factor $G/Z(G)$ belongs to $\mathfrak{X}$, also the derived subgroup $G'$ of $G$ belongs to $\mathfrak{X}$. Thus the famous Schur's theorem just states that finite groups form a Schur class \cite[10.1.4]{Rob}. Schur's theorem admits a generalization to higher terms of the lower central series. More precisely, if $G/Z_i(G)$ is finite, then $\gamma_{i+1}(G)$ is finite (Baer, \cite[14.5.1]{Rob}). In particular, the case $i=1$ is, exactly, Schur's theorem. For more details see \cite{Rob} and the references given there.    

Recall that an exact sequence of groups $$1 \to N \to H \to G \to 1$$ is called $n$-central if $N \leq Z_n(H)$. The next lemma is taken from \cite{DM}.

\begin{lem}\label{lemmaGuram}
Let $1\to N \to H \to G \to 1$ be a $n$-central extension for a fixed positive integer $n$. Then, there exists an epimorphism $\tau \colon G^{\otimes n+1} \to \gamma_{n+1}(H)$ making the following diagram commutative 
$$
\xymatrix{
    G^{\otimes n+1} \ar[rr]^{Id} \ar[dd]_{\tau} && G^{\otimes n+1} \ar[dd]^{\lambda^G_{n+1}}\\
    & \\
    \gamma_{n+1}(H) \ar[rr] && \gamma_{n+1}(G)
}
$$

\end{lem}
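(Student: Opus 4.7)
The plan is to define $\tau$ directly on the generators of $G^{\otimes n+1}$ by fixing a set-theoretic section $g\mapsto\tilde g$ of the projection $H\to G$ and declaring
\[
\tau\bigl(((g_1\otimes g_2)\otimes\cdots\otimes g_n)\otimes g_{n+1}\bigr):=[\tilde g_1,\tilde g_2,\ldots,\tilde g_{n+1}]\in\gamma_{n+1}(H),
\]
with left-normed commutators, and then to verify in order: well-definedness, compatibility with the defining tensor-product relations, surjectivity, and commutativity of the diagram.

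The crucial point is that the right-hand side is independent of the chosen lifts. If some $\tilde g_i$ is replaced by $z\tilde g_i$ with $z\in N$, iterated application of the identities $[ab,c]=[a,c]^b[b,c]$ and $[a,bc]=[a,c][a,b]^c$, together with normality of the subgroups $[N,H,\ldots,H]$, shows that the resulting $(n+1)$-fold commutator differs from $[\tilde g_1,\ldots,\tilde g_{n+1}]$ only by a product of elements lying in $[N,\underbrace{H,\ldots,H}_{n}]$. The assumption $N\leq Z_n(H)$ together with the standard inclusion $[Z_n(H),\gamma_k(H)]\leq Z_{n-k}(H)$ forces this subgroup to be trivial, so the value depends only on the cosets $g_i$. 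Once this is established, the defining relations of the $n$-fold tensor product translate verbatim into commutator identities in $H$, since the actions of $G$ on $G^{\otimes n}$ and of $G^{\otimes n}$ on $G$ built into the construction of $G^{\otimes n+1}$ are exactly the images under $H\to G$ of the conjugation actions in $H$. Hence $\tau$ extends to a bona fide group homomorphism.

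Surjectivity is then a mild variant of the same argument: every $h\in H$ can be written as $z\tilde g$ with $z\in N$, and the same commutator elimination shows that an arbitrary generator $[h_1,\ldots,h_{n+1}]$ of $\gamma_{n+1}(H)$ coincides with $[\tilde g_1,\ldots,\tilde g_{n+1}]$, which lies in the image of $\tau$. Commutativity of the diagram is immediate since the projection $\gamma_{n+1}(H)\to\gamma_{n+1}(G)$ sends $[\tilde g_1,\ldots,\tilde g_{n+1}]$ to $[g_1,\ldots,g_{n+1}]=\lambda^G_{n+1}(((g_1\otimes g_2)\otimes\cdots)\otimes g_{n+1})$. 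The main obstacle is the commutator bookkeeping in the well-definedness step: conceptually clean once one knows $[N,\underbrace{H,\ldots,H}_{n}]=1$, but requiring careful induction on the commutator weight and on the position of the modified entry. A slicker route would be to induct on $n$ via the filtration $N_i:=[N,\underbrace{H,\ldots,H}_i]$, which splits the extension into a $1$-central piece $1\to N_{n-1}\to H\to H/N_{n-1}\to 1$ and an $(n-1)$-central piece $1\to N/N_{n-1}\to H/N_{n-1}\to G\to 1$, to which the inductive hypothesis applies.
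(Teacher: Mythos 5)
First, a point of comparison: the paper does not prove this lemma at all --- it is quoted from \cite{DM} (``The next lemma is taken from [DM]''), so there is no in-paper argument to measure your proposal against; it has to stand on its own.

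Your central computation is correct and is genuinely the heart of the matter: $N\leq Z_n(H)$ gives $[N,\underbrace{H,\dots,H}_{n}]=1$, and the standard expansions $[ab,c]=[a,c]^b[b,c]$, $[a,bc]=[a,c][a,b]^c$ show that the left-normed commutator $[\tilde g_1,\dots,\tilde g_{n+1}]$ is unchanged when any lift is multiplied by an element of $N$. The surjectivity argument and the commutativity of the diagram are also fine. The genuine gap is the sentence ``hence $\tau$ extends to a bona fide group homomorphism.'' By definition $G^{\otimes n+1}=G^{\otimes n}\otimes G$, whose presentation has a generator $t\otimes g$ for \emph{every} $t\in G^{\otimes n}$, not only for left-normed simple tensors, and whose relations $tt_1\otimes g=(t^{t_1}\otimes g^{t_1})(t_1\otimes g)$ involve elements $tt_1$ and $t^{t_1}$ that are not simple tensors. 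A prescription of values on simple tensors therefore cannot be ``checked against the defining relations'': $\tau$ is not even defined on the terms occurring in them. (For $n=1$ your argument does work verbatim, since every generator of $G\otimes G$ is a simple tensor; the difficulty is specific to the iterated construction, $n\geq 2$.) The standard repair --- and essentially the proof in \cite{DM} --- avoids defining anything on generators: functoriality gives an epimorphism $\pi_*\colon H^{\otimes n+1}\to G^{\otimes n+1}$, and iterated use of the right-exactness statement \cite[Proposition 9]{BJR} shows that $\ker\pi_*$ is generated by images of tensors with an entry from $N$, which $\lambda^H_{n+1}$ sends into $[N,\underbrace{H,\dots,H}_{n}]=1$; hence $\lambda^H_{n+1}\colon H^{\otimes n+1}\to\gamma_{n+1}(H)$ factors through $\pi_*$, and the induced map is your $\tau$ (your lift-independence computation then reappears as the verification that the generators of $\ker\pi_*$ die in $\gamma_{n+1}(H)$). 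Your alternative induction via $N_i=[N,\underbrace{H,\dots,H}_{i}]$ meets the same obstruction in another guise: the inductive output lands in $\gamma_{n+1}(H/N_{n-1})$ rather than $\gamma_{n+1}(H)$, and lifting back requires a relative version of the very lemma being proved, so it is not an automatic fix either.
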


\begin{theorem}\label{theorem2} Let $G$ be a finitely presented group and $n\geq 1$. Then, the following are equivalent:

(i) $\gamma_{n+1}(G)$ is finitely presented,

(ii) for arbitrary $n$-central extension of groups $1 \to N \to H \to G \to 1$,
$\gamma_{n+1}(H)$ is finitely presented.
\end{theorem}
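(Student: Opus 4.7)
The plan is to combine Lemma \ref{lemmaGuram} with the two structural results already established in Section 2, namely Theorem \ref{thm:fp} and Lemma \ref{lem:mugeneralized}. The direction (ii) $\Rightarrow$ (i) is essentially free: the trivial extension $1 \to 1 \to G \to G \to 1$ is $n$-central, and applying (ii) to it identifies $\gamma_{n+1}(H)$ with $\gamma_{n+1}(G)$, which is therefore finitely presented.

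For the substantive direction (i) $\Rightarrow$ (ii), fix an arbitrary $n$-central extension $1 \to N \to H \to G \to 1$. Since $G$ is finitely presented and, by hypothesis, so is $\gamma_{n+1}(G)$, Theorem \ref{thm:fp} implies that the $(n+1)$-fold tensor product $G^{\otimes n+1}$ is finitely presented. Lemma \ref{lemmaGuram} then supplies an epimorphism $\tau \colon G^{\otimes n+1} \twoheadrightarrow \gamma_{n+1}(H)$ whose composition with the canonical surjection $\gamma_{n+1}(H) \twoheadrightarrow \gamma_{n+1}(G)$ equals $\lambda^G_{n+1}$. In particular, $\ker(\tau) \subseteq \ker(\lambda^G_{n+1})$.

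By Lemma \ref{lem:mugeneralized}, $\ker(\lambda^G_{n+1})$ is a finitely generated abelian group, and (as recalled just before that lemma) it sits centrally inside $G^{\otimes n+1}$. Consequently $\ker(\tau)$ is itself finitely generated and central, hence it is the normal closure of a finite set of elements in the finitely presented group $G^{\otimes n+1}$. Therefore the quotient $G^{\otimes n+1}/\ker(\tau) \cong \gamma_{n+1}(H)$ is finitely presented, which is exactly (ii).

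The argument is really just a careful assembly of previously established machinery, so I do not expect any serious obstacle. The two points that need attention are: checking that $\ker(\tau) \subseteq \ker(\lambda^G_{n+1})$, which is an immediate diagram chase in the square of Lemma \ref{lemmaGuram}; and verifying that Lemma \ref{lem:mugeneralized} is being applied to an admissible index, namely $n+1 \geq 2$, which is guaranteed by the hypothesis $n \geq 1$ of the theorem.
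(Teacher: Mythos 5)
Your argument is correct, and it rests on the same two pillars as the paper's proof: the commutative square of Lemma \ref{lemmaGuram} and the finite generation of the central subgroup $\ker(\lambda^G_{n+1})$ from Lemma \ref{lem:mugeneralized}. The difference is in how you finish. The paper chases the diagram to get an epimorphism $\ker(\lambda^G_{n+1}) \twoheadrightarrow \ker\bigl(\gamma_{n+1}(H)\to\gamma_{n+1}(G)\bigr)$, concludes that this last kernel is finitely generated abelian, and then reads off from the extension $1\to \ker\bigl(\gamma_{n+1}(H)\to\gamma_{n+1}(G)\bigr)\to\gamma_{n+1}(H)\to\gamma_{n+1}(G)\to 1$ that $\gamma_{n+1}(H)$ is finitely presented if and only if $\gamma_{n+1}(G)$ is — which handles both implications for each fixed extension at once. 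You instead invoke Theorem \ref{thm:fp} to make $G^{\otimes n+1}$ itself finitely presented and then present $\gamma_{n+1}(H)\cong G^{\otimes n+1}/\ker(\tau)$ as a quotient by a finitely generated central (hence finitely normally generated) subgroup. Your route buys a cleaner one-directional closure property (quotient of a finitely presented group by the normal closure of a finite set is finitely presented) in place of the two-way extension argument, at the cost of needing the separate trivial-extension observation for (ii) $\Rightarrow$ (i); both your auxiliary checks ($\ker(\tau)\subseteq\ker(\lambda^G_{n+1})$ and the admissibility of the index $n+1\geq 2$) are exactly the right things to verify and go through.
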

\begin{proof} Let $H$ be as in the theorem.
By Lemma \ref{lemmaGuram} there exists an epimorphism $\tau: G^{\otimes n+1} \to \gamma_{n+1}(H)$ such that the following
diagram commutes:
$$
\xymatrix{
    G^{\otimes n+1} \ar[rr]^{Id} \ar[dd]_{\tau} && G^{\otimes n+1} \ar[dd]^{\lambda^G_{n+1}}\\
    & \\
    \gamma_{n+1}(H) \ar[rr] && \gamma_{n+1}(G)
}
$$
From this diagram we get an epimorphism:
$$
\ker\Big( \lambda^G_{n+1} : G^{\otimes n+1} \to \gamma_{n+1}(G) \Big) \to
\ker\Big(\gamma_{n+1}(H) \to \gamma_{n+1}(G) \Big).
$$
Since $\ker\Big( \lambda^G_{n+1} \Big)$ is an abelian group, we get that
$\ker\Big(\gamma_{n+1}(H) \to \gamma_{n+1}(G) \Big)$ is finitely presented by Lemma \ref{lem:mugeneralized}. Consequently,
the extension of groups
$$
1\to \ker\Big(\gamma_{n+1}(H) \to \gamma_{n+1}(G) \Big) \to \gamma_{n+1}(H) \to \gamma_{n+1}(G) \to 1
$$
implies that $\gamma_{n+1}(H)$ is finitely presented if and only if $\gamma_{n+1}(G)$ is finitely presented.
\end{proof}

\begin{prop}\label{proposition1} Let $\mathfrak{X}$ be one of the following classes of groups:

\begin{enumerate}
    \item[(i)] the class of perfect finitely presented groups;
    \item[(ii)] the class of nilpotent finitely presented groups;
    \item[(iii)] the class of finitely presented groups in which every subgroup is finitely presented.
\end{enumerate}
\noindent Let $G\in \mathfrak{X}$. Then, for each $n$-central extension of groups $1 \to N \to H \to G \to 1$, $n\geq 1$,
we have $\gamma_{n+1}(H)\in \mathfrak{X} $.
\end{prop}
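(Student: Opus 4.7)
The plan is to handle finite presentability uniformly via Theorem \ref{theorem2} and then verify the extra structural property case by case. First I would note that in each of the three classes $\gamma_{n+1}(G)$ is itself finitely presented: in (i) it equals $G$ since $G$ is perfect; in (ii) it is a f.g.\ nilpotent (hence polycyclic) subgroup of $G$; in (iii) it is a subgroup of $G$, f.p.\ by hypothesis. Theorem \ref{theorem2} then yields that $\gamma_{n+1}(H)$ is finitely presented in all three settings, so only the additional property remains to be checked.

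For (i), the approach is to combine the epimorphism $\tau:G^{\otimes n+1}\to \gamma_{n+1}(H)$ from Lemma \ref{lemmaGuram} with the Brown--Loday identification $(A\otimes B)^{ab}\cong A^{ab}\otimes_{\mathbb{Z}}B^{ab}$, which applies in the present setting because all the compatible actions involved are of conjugation type and hence descend trivially to the abelianizations. An induction on $n$ then gives $(G^{\otimes n+1})^{ab}\cong (G^{ab})^{\otimes_{\mathbb{Z}}(n+1)}$, which vanishes when $G$ is perfect; hence $G^{\otimes n+1}$ is perfect, and so is its epimorphic image $\gamma_{n+1}(H)$.

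For (ii), if $G$ has nilpotency class $c$ then $\gamma_{c+1}(H)\leq N\leq Z_n(H)$; iterating the standard inclusion $[\gamma_i(H),Z_j(H)]\leq Z_{j-i}(H)$ forces $\gamma_{c+n+1}(H)=1$, so $H$ itself is nilpotent and thus so is $\gamma_{n+1}(H)$. For (iii), I would study the short exact sequence
\[
1\longrightarrow K\longrightarrow \gamma_{n+1}(H)\longrightarrow \gamma_{n+1}(G)\longrightarrow 1
\]
coming from the proof of Theorem \ref{theorem2}, where $K=\gamma_{n+1}(H)\cap N$ is an epimorphic image of $\ker(\lambda^G_{n+1})$. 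By Lemma \ref{lem:mugeneralized}, $K$ is finitely generated abelian, and it is central in $\gamma_{n+1}(H)$ because $[\gamma_{n+1}(H),N]\leq [\gamma_{n+1}(H),Z_n(H)]=1$. For any subgroup $M\leq \gamma_{n+1}(H)$, $M\cap K$ is f.g.\ abelian (hence f.p.) while $M/(M\cap K)\cong MK/K$ embeds into $\gamma_{n+1}(G)\leq G$ and is therefore f.p.\ by hypothesis; since an extension of f.p.\ by f.p.\ is f.p., $M$ is f.p.

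The step I expect to be the main obstacle is (i): from the central extension $1\to K\to L\to G\to 1$ with $L=\gamma_{n+1}(H)$ alone one can only derive $L'=L''$, not $L=L'$, since $L^{ab}$ is a priori a quotient of the central subgroup $K$. What is genuinely needed is the inclusion $\ker(\lambda^G_{n+1})\subseteq (G^{\otimes n+1})'$, and the natural route to it is precisely the abelianization formula for non-abelian tensor products cited above.
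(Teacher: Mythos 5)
Your treatment of finite presentability via Theorem \ref{theorem2} matches the paper, and your cases (ii) and (iii) are correct. In (ii) you replace the paper's appeal to Visscher's theorem (nilpotency of $G^{\otimes n+1}$) by the elementary observation that $\gamma_{c+1}(H)\leq N\leq Z_n(H)$ forces $H$ itself to be nilpotent of class at most $c+n$; this is cleaner and avoids the tensor product altogether. Your (iii) is essentially the paper's argument repackaged: the paper works with an arbitrary subgroup $H_1\leq\gamma_{n+1}(H)$ and the epimorphism $\ker\bigl(\lambda^G_{n+1}|_{\tau^{-1}(H_1)}\bigr)\to\ker\bigl(\delta|_{H_1}\bigr)$, while you pass through the central extension $1\to K\to\gamma_{n+1}(H)\to\gamma_{n+1}(G)\to 1$ and intersect with $K$; both reduce to Lemma \ref{lem:mugeneralized} plus the closure of finite presentability under extensions.

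Case (i), however, has a genuine gap, and it sits exactly where you flagged the difficulty. The isomorphism $(A\otimes B)^{ab}\cong A^{ab}\otimes_{\mathbb{Z}}B^{ab}$ is false, even for conjugation-type actions: the paper's own remark after Theorem \ref{thm:fpresult} records that for a free group $F$ of rank $n\geq 2$ one has $F\otimes F\cong F'\times\mathbb{Z}^{n(n+1)/2}$ with $F'$ free of countably infinite rank, so $(F\otimes F)^{ab}$ is free abelian of infinite rank while $F^{ab}\otimes_{\mathbb{Z}}F^{ab}\cong\mathbb{Z}^{n^2}$. What is true (and is what the paper uses in the proof of Theorem \ref{thm.finite.polycyclic}) is only that there is a natural \emph{epimorphism} $A\otimes B\to A^{ab}\otimes_{\mathbb{Z}}B^{ab}$; its kernel can strictly contain $(A\otimes B)'$, so the vanishing of $(G^{ab})^{\otimes_{\mathbb{Z}}(n+1)}$ does not give perfectness of $G^{\otimes n+1}$. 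The correct route is the one the paper takes: the Brown--Loday identity $[m\otimes n,\,m'\otimes n']=(m^{n}m^{-1})\otimes({}^{m'}n'n'^{-1})$ shows that tensors of derivative elements are commutators, and when $G$ is perfect the relevant derivatives $[G^{\otimes n},G]$ and $[G,G^{\otimes n}]=[G,\gamma_n(G)]=G$ exhaust the factors, so every generator of $G^{\otimes n+1}$ expands (via the defining relations) into a product of commutators. You should replace the abelianization argument by this commutator computation; the rest of your proof stands.
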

\begin{proof} 
We denote by $\tau : G^{\otimes n+1} \to \gamma_{n+1}(H)$ the first vertical homomorphism
in the diagram given in Lemma \ref{lemmaGuram}.

(i): Since $\gamma_{n+1}(G)=G$, by Theorem \ref{theorem2} $\gamma_{n+1}(H)$ is finitely presented. On the other hand, given two groups $M$ and $N$
acting on each other compatibly, we have the following relation (see \cite[Proposition 2.3]{BL}):
$$
[m\otimes n, m' \otimes n']= (m ^{n}m^{-1})\otimes (^{m'}n'n'^{-1}),
$$
for each $m, m'\in M$ and $n, n'\in N$. This implies that $G^{\otimes n+1}$ is a perfect group, because $G$ is perfect.
Since $\gamma_{n+1}(H)= Im (\tau)$, we get that $\gamma_{n+1}(H)\in \mathfrak{X}$.

(ii): By \cite[Theorem 3.4~(i)]{V}, the $n$-fold tensor product $G^{\otimes n+1}$ is nilpotent. Hence, $\gamma_{n+1}(H)$ is nilpotent. Moreover,
since $G$ is finitely presented and nilpotent, $\gamma_k(G)$ will be finitely presented for all $k\geq 1$.
Thus, by Theorem \ref{theorem2} $\gamma_{n+1}(H)$ is a finitely presented group.

(iii): Let $H_1$ be a subgroup of $\gamma_{n+1}(H)$. We have to show that $H_1$ is finitely presented. Let
$\delta : H \to G$ denote the epimorphism given in the proposition. Since $\delta(H_1)$ is a subgroup of $G$,
it is a finitely presented group. Therefore, it suffices to show that $\ker \Big( \delta |_{H_1} : H_1 \to \delta (H_1)\Big)$
is finitely presented. We have the following commutative diagram:
$$
\xymatrix{
    \tau^{-1}(H_1) \ar[rr]^{\lambda^G_{n+1} |_{\tau^{-1}(H_1)}} \ar[dd]^{\tau |_{\tau^{-1}(H_1)}} && \delta(H_1) \ar[dd]^{Id}\\
    & \\
    H_1 \ar[rr]^{\delta |_{H_1}} && \delta(H_1)
}
$$
This implies an epimorphism $\ker \Big(\lambda^G_{n+1} |_{\tau^{-1}(H_1)}\Big) \to \ker \Big(\delta |_{H_1}\Big)$. Since
$\ker \Big(\lambda^G_{n+1} |_{\tau^{-1}(H_1)}\Big) \subseteq \ker \Big(\lambda^G_{n+1}\Big)$, by Lemma \ref{lem:mugeneralized}
$\ker \Big(\lambda^G_{n+1} |_{\tau^{-1}(H_1)}\Big)$ is a finitely generated abelian group. Hence, $\ker \Big(\delta |_{H_1}\Big)$
is finitely presented. We are done with the proof.
\end{proof}

\begin{rem} Let $  \mathfrak{X} $ be as in Proposition \ref{proposition1}. Then $  \mathfrak{X}  $ is a Schur class.
\end{rem}

\begin{cor} Let $\mathfrak{X}$ be a class of groups defined by
$$
\mathfrak{X} = \{ G \: | \: G \:\text{is nilpotent-by-finite and finitely presented} \}.
$$
\begin{enumerate}
    \item For arbitrary two groups $G, H \in \mathfrak{X}$ acting on each other compatibly, we have $G\otimes H \in \mathfrak{X}$.
    \item Then $\mathfrak{X}$ is a Schur class.
\end{enumerate}

\end{cor}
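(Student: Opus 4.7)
The plan is to first observe that within the class of finitely presented groups, "nilpotent-by-finite" coincides with "polycyclic-by-finite": if $G \in \mathfrak{X}$ and $N$ is a normal nilpotent subgroup of finite index in $G$, then $N$ is finitely generated (having finite index in the finitely generated group $G$) and therefore polycyclic, so $G$ is polycyclic-by-finite. This identification is crucial because polycyclic-by-finite groups are closed under taking subgroups, quotients and extensions, and every such subgroup is itself finitely presented. Combined with the fact that "nilpotent-by-finite" is closed under quotients, one concludes that $\mathfrak{X}$ itself is closed under quotients.

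For part (1), let $G, H \in \mathfrak{X}$ act compatibly on each other. Since $[G,H]$ is a subgroup of the polycyclic-by-finite group $G$, it is finitely presented, so Theorem \ref{thm:fpresult} yields that $G \otimes H$ is finitely presented. To establish that $G \otimes H$ is nilpotent-by-finite, I would invoke the central extension
$$
1 \to \ker(\lambda) \to G \otimes H \to [G,H] \to 1
$$
coming from \cite[Proposition 2.3]{BL}. Pick a nilpotent normal subgroup $N$ of finite index in $[G,H]$, and let $\widetilde{N}$ denote its preimage in $G \otimes H$. Then $\widetilde{N}$ has finite index in $G \otimes H$ and fits into a central extension
$$
1 \to \ker(\lambda) \to \widetilde{N} \to N \to 1,
$$
so $\widetilde{N}$ is a central extension of a nilpotent group by a central subgroup and is therefore itself nilpotent. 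Thus $G \otimes H$ is nilpotent-by-finite, completing part (1).

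For part (2), assume $G/Z(G) \in \mathfrak{X}$; the goal is to prove $G' \in \mathfrak{X}$. The natural strategy is to apply Lemma \ref{lemmaGuram} with $n=1$ to the $1$-central extension
$$
1 \to Z(G) \to G \to G/Z(G) \to 1,
$$
which yields an epimorphism $\tau \colon \bigl(G/Z(G)\bigr)^{\otimes 2} \twoheadrightarrow \gamma_2(G) = G'$. By part (1), the tensor square $\bigl(G/Z(G)\bigr)^{\otimes 2}$ lies in $\mathfrak{X}$. Since $\mathfrak{X}$ is closed under quotients, $G' \in \mathfrak{X}$ as required.

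I do not expect a serious obstacle: the heavy lifting is already done in Theorem \ref{thm:fpresult}, in the central-extension description of $G \otimes H$, and in Lemma \ref{lemmaGuram}. The only place requiring care is to verify that $\mathfrak{X}$ has the closure properties one uses (under subgroups, quotients, and central extensions), and this is precisely what the reduction to polycyclic-by-finite groups handles cleanly.
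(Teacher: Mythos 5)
Your proposal is correct, and its skeleton matches the paper's: reduce ``finitely presented nilpotent-by-finite'' to ``polycyclic-by-finite'' so that $[G,H]$ is finitely presented, invoke Theorem \ref{thm:fpresult} for finite presentability of $G\otimes H$, and use Lemma \ref{lemmaGuram} for the Schur-class statement. You deviate in two small but worthwhile ways. First, for the nilpotent-by-finite part of (1) the paper simply cites \cite[Corollary 3.6~(ii)]{DLT}, whereas you reprove it in this setting via the central extension $1\to\ker(\lambda)\to G\otimes H\to[G,H]\to 1$: the preimage $\widetilde{N}$ of a finite-index nilpotent normal subgroup $N\leq[G,H]$ is a central extension of $N$, hence nilpotent of class at most $\mathsf{c}(N)+1$ and of finite index. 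This is a correct, self-contained replacement for the citation. Second, for (2) the paper routes the finite presentability of $G'$ through Theorem \ref{theorem2}, while you instead observe that $\mathfrak{X}$ is closed under quotients (because its members are exactly the finitely generated nilpotent-by-finite groups, hence polycyclic-by-finite) and conclude directly from the epimorphism $\bigl(G/Z(G)\bigr)^{\otimes 2}\twoheadrightarrow G'$ supplied by Lemma \ref{lemmaGuram} together with part (1). Your route is slightly more elementary and makes the quotient-closure of $\mathfrak{X}$ explicit, which the paper leaves implicit; the paper's route via Theorem \ref{theorem2} is the one that generalizes to higher $n$-central extensions. Both arguments are sound.
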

\begin{proof} (1) By \cite[Corollary 3.6~(ii)]{DLT}, the non-abelian tensor product of two nilpotent-by-finite groups is nilpotent-by-finite. Now, it suffices to show that $G\otimes H$ is finitely presented for each $G, H\in \mathfrak{X}$. Since $G$ is finitely generated nilpotent-by-finite, it follows that $G$ and $[G,H]$ are polycyclic-by-finite and so, finitely presented.  By Theorem \ref{thm:fpresult}, the non-abelian tensor product $G \otimes H$ is finitely presented.  

(2) This follows from Lemma \ref{lemmaGuram}, Theorem \ref{theorem2}. 
\end{proof}

\section*{Acknowledgements}

 The authors are very grateful to Guram Donadze and Nora\'i Rocco for the interesting discussions and suggestions on the best approach to these results. The work of the first was supported by FAPDF and CNPq--Brazil. The second author was supported by CAPES--Brazil. 

 \section*{Data availability}

NA

\end{document}